\documentclass[12pt]{amsart}
\usepackage{amssymb,verbatim,enumerate,ifthen}
\usepackage{cite}
\usepackage[mathscr]{eucal}
\usepackage[utf8]{inputenc}
\usepackage[T1]{fontenc}
\usepackage{esint}
\textwidth=16.5cm
\evensidemargin=-.5cm
\oddsidemargin=-.5cm

\newtheorem{thm}{Theorem}[section]

\newtheorem{prop}[thm]{Proposition}
\newtheorem{lem}[thm]{Lemma}


\theoremstyle{definition}

\newtheorem{rem}[thm]{Remark}
\newtheorem{exa}[thm]{Example}



\numberwithin{equation}{section}
\def\eq#1{{\rm(\ref{#1})}}
\def\dotref#1#2{\ref{#1}.\ref{#1.#2}}
\def\Eq#1#2{\ifthenelse{\equal{#1}{*}}
  {\begin{equation*}\begin{aligned}[]#2\end{aligned}\end{equation*}}
  {\begin{equation}\begin{aligned}[]\label{#1}#2\end{aligned}\end{equation}}}


\def\B{\mathscr{B}}

\def\M{\mathscr{M}}
\def\P{\mathscr{P}}

\def\CS{\mathcal{CS}}

\def\MM{\mathcal{M}}

\newcommand{\uel}[2]{\ifthenelse{\equal{#1}{}}{#2,\dots,#2}{\underbrace{#2,\dots,#2}_{#1\text{ times}}}}
\newcommand\R{\mathbb{R}}
\def\EE{\mathcal{E}}
\newcommand\N{\mathbb{N}}


\newcommand{\abs}[1]{\left| #1 \right| }

\newcommand{\dotvec}[3][SKIPPED]{
\ifthenelse{\equal{#1}{SKIPPED}}
  {#2,\dots,#3}
  {\underbrace{#2,\dots,#3}_{#1\text{ entries}}}
}

\begin{document}
\begin{abstract}
We prove that whenever $M_1,\dots,M_n\colon I^k \to I$, (%
$n,k \in \mathbb{N}$) are symmetric, continuous means on the interval $I$ and
$S_1,\dots,S_m\colon I^k \to I$ ($m <n$) satisfies a sort of embeddability assumptions then
for every continuous function $\mu \colon I^n \to \R$ which is strictly monotone in each coordinate, the functional equation
$$
\mu(S_1(v),\dots,S_m(v),\underbrace{F(v),\dots,F(v)}_{(n-m)\text{ times}})=\mu(M_1(v),\dots,M_n(v))
$$
has the unique solution $F=F_\mu \colon I^k \to I$ which is a mean.
We deliver some sufficient conditions so that $F_\mu$ is well-defined (in particular uniquely determined) and study its properties. 

The background of this research is to provide a broad overview of the family of Beta-type means introduced in (Himmel and Matkowski,~2018).
\end{abstract}
\title[Pexider invariance equation for embeddable mappings]{Pexider invariance equation for embeddable mean-type mappings}
\author[P. Pasteczka]{Pawe\l{} Pasteczka}
\address{Institute of Mathematics \\ University of the National Education Commission, Krakow \\ Podchor\k{a}\.zych str. 2, 30-084 Krak\'ow, Poland}
\email{pawel.pasteczka@uken.krakow.pl}

\subjclass[2010]{26E60; 39B12; 39B22}
\keywords{functional equations; solvability; means; invariant means; uniqueness; Beta-type means}
\maketitle
\newcommand{\embed}{\triangleleft}
\newcommand{\om}{\prec}
\newcommand{\embedR}{\triangleright}
\newcommand{\omR}{\succ}

\section{Introduction}
An $n$-variable mean on an interval $I \subset \R$ is a function $M\colon I^n \to I$ which satisfies so-called mean-property, that is
$\min(v)\le M(v)\le \max(v)$ for all $v \in I^n$.
We say that a mean is symmetric if for every $v \in I^n$ and permutation $\sigma$ of $\{1,\dots,n\}$ we have $M(v\circ\sigma)=M(v)$. 
From now on, let  $\MM_n(I)$ be the family of all symmetric, continuous $n$-variable means on $I$. 
In this paper, we focus only on means which are symmetric and continuous. 

There are a number of problems related to means. One of the most classical ones arises from the equality 
\Eq{EInv}{
K(M_1(v),\dots,M_n(v))=K(v) \qquad (v \in I^n),
}
where $K$ and $M_1,\dots,M_n$ are $n$-variable means on $I$, which we also denote in a brief form as $K \circ (M_1,\dots,M_n)=K$. 

In the most classical approach for a given sequence $(M_1,\dots,M_n)$ we are searching for the mean $K$ that satisfies \eq{EInv} (see, for example, \cite{BorBor87,JarJar18} and references therein; we describe it briefly in section~\ref{sec:ApplInvMean}). This is not the field we are going to focus on.   

The problem we are going to solve arises from the paper by Matkowski \cite{Mat99a} who posted in some sense the opposite question. Namely, he solves \eq{EInv} in the case $n=2$, and fixed means $K$ and $M_1$ ($M_2$ is the unknown mean). The main outcome of the paper \cite{Mat99a} states that this problem has a unique solution whenever $K$ is a continuous and strictly increasing mean (that is $K$ is strictly increasing in each of its variables). It is not easy to generalize this statement to the case $n>2$. We have at least a few approaches to this problem. For example, we assume that $K$ and all but one $M_i$-s are given, and we are searching for the missing $M_i$ (this approach is, however, not of big interest till now).

In the second approach we assume that $K$ is $(M_1,\dots,M_n)$-invariant (that is $K$ solves \eq{EInv}) but we unify some of $M_i$-s, that is, we search for a mean $F$ which solves the equation of the following type
\Eq{E148b}{
K(v)=K(M_1(v),\dots,M_m(v),\uel{(n-m)}{F(v)}))\qquad (v \in I^n),
}
where $m<n$ and all means are $n$-variable means on $I$. 
Let us observe that the length of the suffix containing $F(v)$ (the value $n-m$) is imposed by the domain of $K$, so we can omit it whenever convenient. 

Binding equalities \eq{EInv} and \eq{E148b} leads us the following functional equation
\Eq{*}{
K(M_1(v),\dots,M_n(v))=K(M_1(v),\dots,M_m(v),F(v),\dots,F(v)) \qquad (v \in I^n).
}
In this setting, we assume that $K$ is symmetric and replace a suffix of $M_i$-s by a single (unknown) mean $F$. Since we have already applied the invariance property, this equation is in fact of the form 
\Eq{E148}{
\mu(M_1(v),\dots,M_n(v))=\mu(M_1(v),\dots,M_m(v),F(v),\dots,F(v)) \qquad (v \in I^k),
}
where $m<n$, all $M_i$-s are $k$-variable means on $I$, and $\mu$ is an $n$-variable mean on $I$.
In this setup, we have an additional parameter $k\in\N$ that does not appear in \eq{EInv} and \eq{E148b}, since that approaches force $k=n$. Indeed, in this setting the length of the vector $v$ does not have to coincide with the number of means. Thus, at this stage, we replaced the previous notation of external mean ($K$ by $\mu$) to emphasize that, this time, it could have a different number of variables than $M_i$-s (and $F$).
We can also consider the same type of equality when $\mu$ is not symmetric, but it is somewhat more difficult to express it in a compact way. Perhaps the most comprehensive study of this problem in the nonsymmetric setup was presented in \cite{MatPas20b}. In that paper, this problem was solved under the additional assumption that $K$ (or $\mu$) satisfies \eq{EInv}, which simplifies the equality to the original formulation \eq{E148b}. However, this can be easily relaxed when the considered problem is stated in the form \eq{E148}.

In this note we are going to solve the pexiderized version of \eq{E148}, that is the equality of the form 
\Eq{MN}{
\mu(M_1(v),\dots,M_n(v))=\mu(S_1(v),\dots,S_m(v),F(v),\dots,F(v))\qquad (v \in I^k),
}
where $k,m,n\in\N$ with $m<n$ are parameters, $\mu\colon I^n \to \R$ is a continuous function which is strictly monotone in each of its variables, and $S_1,\dots,S_m,M_1,\dots,M_n$ are $k$-variable means on an interval $I$; $F\colon I^k\to I$ is the unknown function. 

Let us stress, that it could happen that equality \eq{MN} has no solution in the family of means (or even functions) $F\colon I^k \to I$. The aim of this paper is to solve \eq{MN}. More precisely, we study mutual relations between $\mu$, $M_i$-s, and $S_j$-s which 
ensure us that \eq{MN} has a unique solution $F$, which is a mean.
The key tool to solve this equation will be the new definition, so-called embeddability. In some sense, this is what this note is about.

\section{Embeddability}
We are going to introduce embeddability in three steps. First, we define it for vectors of reals (sect.~\ref{sec:EV}), then for functions (sect.~\ref{sec:EF}). Finally, in the next section (sect.~\ref{sec:EM}), we restrict our considerations to the mean setting.

\subsection{\label{sec:EV} Embeddability of vectors}

For $n \in \N$ and a vector $v \in \R^n$, we denote be $v^\uparrow$ (resp. $v^\downarrow$) the (uniquely determined) nondecreasing (resp. nonincreasing) permutation of elements in $v$.

For $m,n \in \N$ with $m \le n$ we say that a vector $v \in \R^m$ is \emph{ordered minorized} (resp. \emph{ordered majorized}) by a vector $w\in\R^n$ if $v^\uparrow_{k} \ge w^\uparrow_k$ (resp. $v^\downarrow_k\le w^\downarrow_{k}$) for all $k \in \{1,\dots,m\}$; we denote it by $v\omR w$ (resp. $v \om w$).
Finally, we say that $v\in\R^m$ is \emph{embedded} in $w \in\R^n$ if it is both ordered majorized and minorized by $w$; we denote it by $v \embed w$. In the next lemma, we show how these properties simplify in the case $m=n$. 

\begin{lem}\label{lem:m=n}
Let $n\in \N$ and $I\subset \R$ be an interval. Then
\begin{enumerate}[{\rm (a)}]
    \item\label{lem:m=n.1} $v\in I^n$ is order majorized by $w \in I^n$ if, and only if, $w$ is ordered minorized by $v$;
    \item\label{lem:m=n.2} order majorization (and minorization) restricted to $\R^n$ is reflexive and transitive;
    \item\label{lem:m=n.3} $v\in I^n$ is embedded in $w \in I^n$ if, and only if, $v$ is a permutation of $w$;
    \item\label{lem:m=n.4} for every continuous symmetric function $f \colon I^n \to \R$ which is nondecreasing in each of its variables, and every $v,w \in I^n$ with $v \prec w$ we have $f(v)\le f(w)$.
\end{enumerate}
\end{lem}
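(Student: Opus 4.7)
The plan is to treat all four assertions as direct manipulations of the sorted representations $v^\uparrow$ and $v^\downarrow$, exploiting the elementary identity $v^\uparrow_k = v^\downarrow_{n+1-k}$, valid for any $v \in \R^n$ and any $k \in \{1,\dots,n\}$. For (a), I would unpack the definitions: $v \om w$ asserts $v^\downarrow_k \le w^\downarrow_k$ for all $k$, while $w \omR v$ asserts $v^\uparrow_k \le w^\uparrow_k$ for all $k$, and the substitution $k \mapsto n+1-k$ together with the identity above turns one statement into the other. Part (b) is then immediate: reflexivity follows because $v^\downarrow_k \le v^\downarrow_k$, and transitivity from chaining componentwise inequalities.

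For (c), the forward direction is trivial, since a permutation leaves both $v^\uparrow$ and $v^\downarrow$ invariant. For the converse, $v \embed w$ simultaneously gives $v^\downarrow_k \le w^\downarrow_k$ and $v^\uparrow_k \ge w^\uparrow_k$ for every $k$; translating the latter via the identity above and combining with the former yields $v^\downarrow = w^\downarrow$, so $v$ and $w$ have the same multiset of entries and are therefore permutations of each other. Finally, for (d), the symmetry of $f$ reduces the desired inequality to $f(v^\downarrow) \le f(w^\downarrow)$, and this follows from the componentwise inequality $v^\downarrow_k \le w^\downarrow_k$ together with monotonicity of $f$ in each coordinate, by altering one variable at a time.

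The chief ``difficulty'' here is merely bookkeeping: keeping track of whether a given inequality is expressed in terms of $v^\uparrow$ or $v^\downarrow$ and reindexing accordingly. No deeper analysis is required, and in fact the continuity of $f$ assumed in (d) is not used.
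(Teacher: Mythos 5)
Your proof is correct, and it correctly notes that continuity is not needed in (d). The paper itself omits the proof entirely, declaring all four assertions straightforward, so there is no authorial argument to compare against; your reindexing identity $v^\uparrow_k = v^\downarrow_{n+1-k}$ plus the one-coordinate-at-a-time monotonicity argument is exactly the kind of routine verification the author had in mind.
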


Proofs of all the above properties are quite straightforward, and thus we decide to omit them.
Due to this lemma, we can use both notations $w\om v$ and $v \omR w$ for order majorizations and minorizations, as it is either equivalent or has a disjoint domain. Thus, purely formally, \begin{equation*}\begin{aligned}
v \prec w &:\iff \begin{cases}
v^\downarrow_k\le w^\downarrow_{k} &\text{ for }k \in \{1,\dots,m\}; \ m \le n;\\
v^\uparrow_{k}\le w^\uparrow_k &\text{ for }k \in \{1,\dots,n\}; \ m > n,
\end{cases}\\
v \succ w &:\iff \begin{cases}
v^\uparrow_k\ge w^\uparrow_{k} &\text{ for }k \in \{1,\dots,m\}; \ m \le n;\\
v^\downarrow_k\ge w^\downarrow_{k} &\text{ for }k \in \{1,\dots,n\}; \ m > n;\\
\end{cases}
\end{aligned} 
\qquad\qquad\qquad (v \in \R^m; w \in \R^n).
\end{equation*}
Before proceeding further, let us show a few examples of majorization, minorization, and embeddability.
\begin{exa}\phantom{a}\\
(a) Let $v=(3,15)$, $w=(5,0,10)$. Then $v^{\uparrow}=(3,15)$ and  $w^\uparrow=(0,5,10)$. Whence $w^\uparrow_k \le v^\uparrow_k$ for $k\in\{1,2\}$, and $v \succ w$. On the other hand $v^{\downarrow}=(15,3)$ and  $w^\downarrow=(10,5,0)$. Thus $v^\downarrow_1 > w^\downarrow_1$ which shows that $v$ is not ordered majorized by $w$.\\
(b) Let $v=(3,8)$, $w=(5,0,10)$. Then, similarly to the previous case, we have $v \succ w$. This time, however, $v^{\downarrow}=(8,3)$ and  $w^\downarrow=(10,5,0)$ and $v^\downarrow_i \le w^\downarrow_i$ for $i \in\{1,2\}$ which shows that $v$ is ordered majorized by $w$. Therefore, $v$ is embedded in $w$ (briefly $v\embed w$). \\
(c) Let $v=(5,6,7)$, $w=(2,4,6,8)$. Then $v \omR w$, however $w^\downarrow_3=4< 5=v^\downarrow_3$, that is, $v \not \om w$.
\end{exa}

Now we show that these properties under the transformation by monotone functions either remain unchanged or reverse. To this end,
for a function $f\colon X \to Y$ and a vector $v\in X^n$, let us denote $\vec f(v):=(f(v_1),\dots,f(v_n))\in Y^n$.

\begin{lem}\label{lem:monotone}
Let $I \subset \R$ be an interval, $v,\ w$ be two vectors having entries in $I$, and $f \colon I\to \R$.
\begin{enumerate}[(a)]
    \item\label{lem:monotone.a} If $f$ is nondecreasing and $v \prec w$ then $\vec f(v) \prec \vec f(w)$.
    \item\label{lem:monotone.b} If $f$ is nonincreasing and $v \prec w$ then $\vec f(w) \prec \vec f(v)$.
    \item\label{lem:monotone.c} If $f$ is monotone and $v \embed w$ then $\vec f(v) \embed \vec f(w)$.
\end{enumerate}
\end{lem}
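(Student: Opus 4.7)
The crucial observation is that for any monotone $f\colon I \to \R$ and any vector $v$ with entries in $I$, sorting commutes with $\vec f$ up to possibly swapping direction. Precisely, if $f$ is nondecreasing then $(\vec f(v))^\uparrow_k = f(v^\uparrow_k)$ and $(\vec f(v))^\downarrow_k = f(v^\downarrow_k)$ for every admissible $k$, while if $f$ is nonincreasing these identities swap: $(\vec f(v))^\uparrow_k = f(v^\downarrow_k)$ and $(\vec f(v))^\downarrow_k = f(v^\uparrow_k)$. I would record this as a one-line remark at the top of the proof, since each of (a), (b), (c) is a direct unfolding of the definitions combined with these identities.

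For part (a), I split on the two regimes in the definition of $v \prec w$. If $m \le n$, the assumption gives $v^\downarrow_k \le w^\downarrow_k$ for $k \in \{1,\dots,m\}$; applying the nondecreasing $f$ and using $(\vec f(v))^\downarrow_k = f(v^\downarrow_k)$ immediately yields $(\vec f(v))^\downarrow_k \le (\vec f(w))^\downarrow_k$, which is precisely $\vec f(v) \prec \vec f(w)$ (the length ordering is preserved because $\vec f$ does not change lengths). The case $m > n$ is identical with $\uparrow$ in place of $\downarrow$.

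For part (b), I run the same case analysis but now apply the nonincreasing $f$, which reverses each coordinate inequality and, via the swapped sorting identity, converts $v^\downarrow_k \le w^\downarrow_k$ into $(\vec f(v))^\uparrow_k \ge (\vec f(w))^\uparrow_k$ (and analogously in the other regime). A direct comparison with the formal definition of $\prec$ in the relevant length regime shows this is exactly the statement $\vec f(w) \prec \vec f(v)$.

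Part (c) is then an immediate corollary after one small preparatory observation: it is clear from the definitions that $v \succ w$ is literally the same relation as $w \prec v$ (both cases collapse into the same pair of sorted-entry inequalities). Consequently, $v \embed w$ is equivalent to the conjunction $v \prec w$ and $w \prec v$. If $f$ is nondecreasing, applying (a) to each of these yields $\vec f(v) \prec \vec f(w)$ and $\vec f(w) \prec \vec f(v)$, hence $\vec f(v) \embed \vec f(w)$. If $f$ is nonincreasing, applying (b) to the two assumptions produces the same pair of conclusions with the roles of $v$ and $w$ swapped in the intermediate step but in the same conjunction, again giving $\vec f(v) \embed \vec f(w)$. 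The only real obstacle is bookkeeping the $m\le n$ versus $m>n$ cases and keeping the $\uparrow/\downarrow$ swaps under control; no genuine analytic difficulty appears.
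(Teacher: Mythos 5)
Your proposal is correct and follows essentially the same route as the paper: the same key identity that sorting commutes with $\vec f$ (with the $\uparrow/\downarrow$ swap for nonincreasing $f$), the same case split on $m\le n$ versus $m>n$, and the same derivation of (c) from (a) and (b) via the identification of $v\succ w$ with $w\prec v$. No gaps.
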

\begin{proof}
Fix $m,n\in\N$, $v \in \R^m$, and $w \in \R^n$.
    If $f$ is nondecreasing then we have $(\vec f(v))^\uparrow=\vec f(v^\uparrow)$; $(\vec f(w))^\uparrow=\vec f(w^\uparrow)$. Therefore, for $m\le n$, we have (here $\bigwedge$ is the generalized "and" operator)
    \Eq{*}{
    v \prec w 
    &\Longrightarrow \bigwedge_{k=1}^{m} v^\downarrow_k\le w^\downarrow_{k}
    \Longrightarrow \bigwedge_{k=1}^{m} f(v^\downarrow_k)\le f(w^\downarrow_{k})
    \Longrightarrow \bigwedge_{k=1}^{m} \big(\vec f(v^\downarrow)\big)_k\le \big(\vec f(w^\downarrow)\big)_k\\
    &\Longrightarrow \bigwedge_{k=1}^{m} \big(\vec f(v)\big)^\downarrow_k\le \big(\vec f(w)\big)^\downarrow_k
    \Longrightarrow \vec f(v) \prec \vec f (w).
    }
Similarly, for $m>n$,
    \Eq{*}{
    v \prec w 
    &\Longrightarrow \bigwedge_{k=1}^{n} v^\uparrow_k\le w^\uparrow_{k}
    \Longrightarrow \bigwedge_{k=1}^{n} f(v^\uparrow_k)\le f(w^\uparrow_{k})
    \Longrightarrow \bigwedge_{k=1}^{n} \big(\vec f(v^\uparrow)\big)_k\le \big(\vec f(w^\uparrow)\big)_k\\
    &\Longrightarrow \bigwedge_{k=1}^{n} \big(\vec f(v)\big)^\uparrow_k\le \big(\vec f(w)\big)^\uparrow_k
    \Longrightarrow \vec f(v) \prec \vec f (w),
    }
    which completes the first assertion. The proof of the second one is analogous. Namely, if $f$~is nonincreasing then
    \Eq{*}{
    v \prec w 
    &\Longrightarrow \bigwedge_{k=1}^{m} v^\downarrow_k\le w^\downarrow_{k}
    \Longrightarrow \bigwedge_{k=1}^{m} f(v^\downarrow_k)\ge f(w^\downarrow_{k})
    \Longrightarrow \bigwedge_{k=1}^{m} \big(\vec f(v^\downarrow)\big)_k\ge \big(\vec f(w^\downarrow)\big)_k\\
    &\Longrightarrow \bigwedge_{k=1}^{m} \big(\vec f(v)\big)^\uparrow_k\ge \big(\vec f(w)\big)^\uparrow_k
    \Longrightarrow \vec f(v) \succ \vec f (w) \qquad\qquad &(m\le n);\\
    v \prec w 
    &\Longrightarrow \bigwedge_{k=1}^{n} v^\uparrow_k\le w^\uparrow_{k}
    \Longrightarrow \bigwedge_{k=1}^{n} f(v^\uparrow_k)\ge f(w^\uparrow_{k})
    \Longrightarrow \bigwedge_{k=1}^{n} \big(\vec f(v^\uparrow)\big)_k\ge \big(\vec f(w^\uparrow)\big)_k\\
    &\Longrightarrow \bigwedge_{k=1}^{n} \big(\vec f(v)\big)^\downarrow_k\ge \big(\vec f(w)\big)^\downarrow_k
    \Longrightarrow \vec f(v) \succ \vec f (w) \qquad\qquad &(m>n).
    }

    To show the last implication, assume that $f$ is monotone and $v \embed w$, which means that $m \le n$, $v \prec w$, and $w \prec v$. Then (depending on the monotonicity of $f$) we use one of the first two parts to show that $\vec f(v) \prec \vec f(w)$ and $\vec f(w) \prec \vec f(v)$. Then, by the definition of embeddability, we get $\vec f(v) \embed \vec f(w)$.
\end{proof}

Now we are heading towards the solution of \eq{MN}. To this end, for an interval $I \subset \R$ and $n \in \N$, let $\CS_n(I)$ be the family of all continuous, symmetric functions $\mu \colon I^n \to \R$, which are strictly increasing in each of its variables. 

\begin{lem}\label{lem:Is}
Let $I$ be an interval, $n \in \N$, and $\mu \in \CS_n(I)$. Then for every $w \in I^n$, $m < n$, and $v \in I^m$ with $v \embed w$, the equation
\Eq{E22}{
\mu(v_1,\dots,v_m,\uel{n-m}{x})=\mu(w_1,\dots,w_n)
}
has the unique solution $x_0 \in I$.  
Moreover $\min(w)\le x_0 \le \max(w)$.
\end{lem}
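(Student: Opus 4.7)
The plan is to reduce the problem to the intermediate value theorem for the one-variable function $\phi\colon I\to\R$ defined by
\[
\phi(x):=\mu(v_1,\dots,v_m,\underbrace{x,\dots,x}_{n-m\text{ times}}).
\]
Since $\mu$ is continuous and strictly increasing in each of the last $n-m\ge 1$ arguments, $\phi$ is continuous and strictly increasing on $I$; uniqueness of $x_0$ (if it exists) is then immediate. To obtain both existence and the localization $\min(w)\le x_0\le\max(w)$, it is enough to prove
\[
\phi(\min(w))\le\mu(w)\le\phi(\max(w))
\]
and then invoke the IVT.

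Let $a:=\min(w)$ and $b:=\max(w)$, and form the $n$-vectors
\[
u:=(v_1,\dots,v_m,\underbrace{a,\dots,a}_{n-m\text{ times}}),\qquad u':=(v_1,\dots,v_m,\underbrace{b,\dots,b}_{n-m\text{ times}}),
\]
so that $\phi(a)=\mu(u)$ and $\phi(b)=\mu(u')$ by the symmetry of $\mu$. The strategy is to verify the two majorization relations $u\prec w$ and $w\prec u'$ (all vectors of common length $n$) and then apply Lemma~\ref{lem:m=n}(d) to $\mu$, which is symmetric, continuous, and nondecreasing in each of its variables. This yields $\mu(u)\le\mu(w)\le\mu(u')$, i.e.\ $\phi(a)\le\mu(w)\le\phi(b)$, as required.

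The core of the argument is therefore the verification of $u\prec w$ and $w\prec u'$, which amounts to bookkeeping on sorted rearrangements. The embeddability hypothesis $v\embed w$ unpacks into the two families $v^\downarrow_k\le w^\downarrow_k$ and $v^\uparrow_k\ge w^\uparrow_k$ for $k=1,\dots,m$; in particular every entry of $v$ lies in $[a,b]$. Hence the $n-m$ copies of $a$ slot at the end of the decreasing rearrangement of $u$, and the $n-m$ copies of $b$ slot at the beginning of the decreasing rearrangement of $u'$. With these explicit forms at hand, the inequality $u^\downarrow_k\le w^\downarrow_k$ reduces, for $k\le m$, to the majorization half of $v\embed w$, and, for $k>m$, to the trivial $a\le w^\downarrow_k$. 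An analogous case split, after converting between $\downarrow$- and $\uparrow$-rearrangements, reduces $w\prec u'$ to the minorization half $v^\uparrow_k\ge w^\uparrow_k$. This combinatorial check is the only real obstacle; once it is settled, the lemma follows from the IVT applied to $\phi$.
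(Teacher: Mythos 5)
Your proposal is correct and follows essentially the same route as the paper: define the one-variable section $\phi$, use the embedding inequalities to sandwich $w$ between $(v,\min(w),\dots,\min(w))$ and $(v,\max(w),\dots,\max(w))$ in the order-majorization sense, apply Lemma~\ref{lem:m=n}\eq{lem:m=n.4}, and finish with the intermediate value theorem and strict monotonicity. The only difference is cosmetic: you spell out the case split verifying the two majorization relations, which the paper states without detail.
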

\begin{proof}
For the sake of brevity,  define the function $f \colon I \to \R$ by
\Eq{*}{
f(x):=\mu(v_1,\dots,v_m,x,\dots,x).
}
Since $\mu$ is continuous and strictly increasing in each of its variables we obtain that so is $f$. Moreover, in this new setup, equation \eq{E22} becomes $f(x)=\mu(w)$.

However, since $v \embed w$, we know that $v_k^\downarrow \le w_k^\downarrow$ and $w_k^\uparrow \le v_k^\uparrow$ ($k\in\{1,\dots,m\}$) whence
\Eq{*}{
\alpha:=(\uel{n-m}{\min(w)},v_1,\dots,v_m)\prec w \prec (v_1,\dots,v_m,\uel{n-m}{\max(w)})=:\beta.
}

Applying the symmetry and monotonicity of $\mu$ again, by Lemma~\dotref{lem:m=n}{4}, we get 
\Eq{*}{
f(\min(w))= \mu(\alpha)\le \mu(w)\le \mu(\beta)=f(\max(w)).
}
Since $f$ is continuous and strictly increasing, there exists $x_0 \in [\min (w), \max (w)]$ such that $f(x_0)=\mu(w)$. To complete the proof, note that the equation $f(x)=\mu(w)$ has at most one solution in $I$. 
\end{proof}

\subsection{\label{sec:EF} Embeddability of functions}
For all $m,n \in \N$ with $m \le n$ and a set $X$, we say that a sequence of functions $f=(f_1,\dots,f_m)$, $f_i \colon X \to \R$ is \emph{order majorized} by  $g=(g_1,\dots,g_n)$, $g_j \colon X \to \R$ provided
\Eq{*}{
(f_1(x),\dots,f_m(x)) \om (g_1(x),\dots,g_n(x)) \text{ for all }x \in X.
} 
Analogously we introduce order minorization and embeddability of functions. We also adapt the same notations, that is: $f \om g$, $f\omR g$, and $f \embed g$, respectively. 

\begin{lem}[Implicit function theorem]\label{lem:IFT}
    Let $X$ be a metric space, $I \subset \R$ be an interval, $m,n \in \N$ with $m < n$, $\mu \in \CS_n(I)$, and $f_1,\dots,f_m,g_1,\dots,g_n \colon X \to I$ be such that $(f_1,\dots,f_m) \embed (g_1,\dots,g_n)$. 
    Then the functional equation 
    \Eq{219}{
    \mu\big(f_1(x),\dots,f_m(x),\uel{n-m}{\alpha(x)}\big)
    =\mu\big(g_1(x),\dots,g_n(x)\big)
    }
    has the unique solution $\alpha \colon X \to I$, and 
    \Eq{244}{
    \min(g_1,\dots,g_n)\le \alpha\le \max(g_1,\dots,g_n).
    }
    Moreover if $\mu,f_1,\dots,f_m$ and $g_1,\dots,g_n$ are continuous, then so is $\alpha$.
\end{lem}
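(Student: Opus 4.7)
The plan is to obtain the pointwise conclusions---existence, uniqueness, and the bound \eq{244}---by applying Lemma~\ref{lem:Is} separately at each $x \in X$, and then to bootstrap continuity of $\alpha$ from the continuity of the data by a standard compactness-plus-uniqueness argument.

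For the first part, I fix $x \in X$. The definition of embeddability for function sequences gives $(f_1(x),\dots,f_m(x)) \embed (g_1(x),\dots,g_n(x))$, so Lemma~\ref{lem:Is} produces a unique real $\alpha(x) \in I$ satisfying \eq{219} at $x$, together with $\min(g_1(x),\dots,g_n(x)) \le \alpha(x) \le \max(g_1(x),\dots,g_n(x))$. Letting $x$ vary defines $\alpha \colon X \to I$ and simultaneously yields \eq{244}.

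For the continuity statement, assume the added continuity hypothesis on $\mu$, the $f_i$-s and the $g_j$-s, and fix $x_0 \in X$. I take an arbitrary sequence $x_k \to x_0$. Continuity of the $g_j$-s makes both $\min(g_1(x_k),\dots,g_n(x_k))$ and $\max(g_1(x_k),\dots,g_n(x_k))$ convergent (to the corresponding extrema at $x_0$), so by \eq{244} the sequence $\{\alpha(x_k)\}$ is bounded. Any convergent subsequence $\alpha(x_{k_j}) \to \alpha^\ast$ therefore has its limit in $[\min(g_1(x_0),\dots,g_n(x_0)),\max(g_1(x_0),\dots,g_n(x_0))] \subset I$. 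Passing to the limit in \eq{219} along $(x_{k_j})$ and using continuity of $\mu$, the $f_i$-s and the $g_j$-s gives
\[
\mu\bigl(f_1(x_0),\dots,f_m(x_0),\uel{n-m}{\alpha^\ast}\bigr)=\mu\bigl(g_1(x_0),\dots,g_n(x_0)\bigr),
\]
so by the uniqueness half of Lemma~\ref{lem:Is} we conclude $\alpha^\ast=\alpha(x_0)$. Since the bounded sequence $\{\alpha(x_k)\}$ has every convergent subsequence tending to the same limit $\alpha(x_0)$, the full sequence converges to $\alpha(x_0)$, establishing continuity.

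The main obstacle I anticipate is the subtle point of ensuring that Bolzano--Weierstrass extractions stay inside the possibly non-closed interval $I$: an accumulation point escaping to a boundary point outside $I$ would fall outside the domain of $\mu$ and break the limiting argument. The bound \eq{244} resolves this cleanly, because it traps $\alpha(x_k)$ between two values (the coordinate-wise extrema of the $g_j$-s) that themselves lie in $I$ and converge to points of $I$. Everything else is a routine passage to the limit in a strictly monotone continuous equation.
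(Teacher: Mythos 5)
Your proposal is correct and follows essentially the same route as the paper: pointwise application of Lemma~\ref{lem:Is} for existence, uniqueness, and the bound \eq{244}, followed by a subsequence/accumulation-point argument combined with uniqueness to establish continuity. Your extra care in noting that the accumulation point lands in $[\min(g_1(x_0),\dots,g_n(x_0)),\max(g_1(x_0),\dots,g_n(x_0))]\subset I$ makes explicit a point the paper treats tersely, but the argument is the same.
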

\begin{proof}
By Lemma~\ref{lem:Is}, we know that for each $x \in X$ there exists a unique $\alpha(x)\in I$ which solves \eq{219}. Furthermore, by the same lemma, we know that $\alpha$ satisfies \eq{244}. Therefore, the whole effort in the proof is to verify the "moreover" part. 

Assume that $\mu,f_1,\dots,f_m$ and $g_1,\dots,g_n$ are continuous.
Fix $x_0 \in X$ and take any sequence $(x_k)_{k=1}^\infty$ of points in $X$ converging to $x_0$. Let $u \in \R$ be an arbitrary accumulation point of the sequence $(\alpha(x_n))_{n=1}^\infty$. 
Then there exists a subsequence $(n_k)$ such that $(\alpha(x_{n_k}))_{k=1}^\infty$ converges to $u$. By \eq{244} we know that $u \in I$. Moreover, if we substitute $x:=x_{n_k}$ to \eq{219} and take the limit $k \to \infty$, we obtain 
\Eq{*}{
    \mu\big(f_1(x_0),\dots,f_m(x_0),\uel{n-m}u\big)
    =\mu\big(g_1(x_0),\dots,g_n(x_0)\big),
    }
whence $u=\alpha(x_0)$. Since $u$ was an arbitrary accumulation point of the sequence $(\alpha(x_n))_{n=1}^\infty$, we find that it converges to $\alpha(x_0)$ which shows that $\alpha$ is continuous.
\end{proof}

\section{\label{sec:EM} Embedability of means}
Recall that means are simply functions that admit the additional property. Consequently, one can speak about the embeddability of means exactly like it was done in the case of functions. 
Nevertheless, we introduce a few handy notations which allow us to express our statements in a more compact way. For a sequence $M=(M_1,\dots,M_n)\in\MM_k(I)^n$ let us set 
\Eq{*}{
\EE_m(M)&:=\{S \in \MM_k(I)^m \colon S \embed M\}\qquad (m \in\{1,\dots,n-1\});\\
\EE(M)&:=\bigcup_{m=1}^{n-1} \EE_m(M).
}
Furthermore, for the sake of brevity, we define $\abs{S}$ as the number of means in $S$, that is, $\abs{S}=m$ for all $S \in \MM_k(I)^m$ ($m \in\{1,\dots,n-1\}$).

Based on Lemma~\ref{lem:IFT}, for $n,k \in \N$, an interval $I \subset \R$, $M \in \MM_k(I)^n$, $S \in \EE(M)$, and $\mu\in \CS_n(I)$ we define ${\mathscr T}_{S,M}(\mu) \colon I^k \to I$ so that for all $v \in I^k$ the value ${\mathscr T}_{S,M}(\mu)(v)$ is the solution $x$ of equation
    \Eq{*}{
    \mu(S_1(v),\dots,S_{\abs S}(v),\uel{(n-|S|)}x)=\mu(M_1(v),\dots,M_n(v)).
    }

First, we show that ${\mathscr T}_{S,M}(\mu)$ is a symmetric mean.
\begin{thm}\label{thm:T}
    Let $n,k \in \N$, $I \subset \R$ be an interval,  $\mu\in \CS_n(I)$, $M\in \MM_k(I)^n$, and $S \in \EE(M)$. Then ${\mathscr T}_{S,M}(\mu)\in \MM_k(I)$.
\end{thm}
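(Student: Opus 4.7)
The plan is to verify in turn the three properties comprising membership in $\MM_k(I)$: the mean property, continuity, and symmetry. All three will follow quite directly from Lemma~\ref{lem:IFT} applied with $X=I^k$, $f_i=S_i$, $g_j=M_j$, which guarantees that $\alpha:={\mathscr T}_{S,M}(\mu)\colon I^k\to I$ is well-defined.

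For the mean property, I would use the bound \eq{244} from Lemma~\ref{lem:IFT}, namely
\Eq{*}{
\min(M_1,\dots,M_n)\le \alpha\le \max(M_1,\dots,M_n).
}
Since each $M_j\in \MM_k(I)$ is itself a mean, pointwise we have $\min(v)\le M_j(v)\le \max(v)$ for every $v \in I^k$, and thus $\min(v)\le \min_j M_j(v)\le \alpha(v)\le \max_j M_j(v)\le \max(v)$. Continuity of $\alpha$ is immediate from the "moreover" clause of Lemma~\ref{lem:IFT}, applied to the continuous data $\mu,S_1,\dots,S_{|S|},M_1,\dots,M_n$.

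The symmetry of $\alpha$ is a uniqueness argument: for any permutation $\sigma$ of $\{1,\dots,k\}$ and any $v \in I^k$, the symmetry of the means $S_i$ and $M_j$ gives $S_i(v\circ\sigma)=S_i(v)$ and $M_j(v\circ\sigma)=M_j(v)$. Therefore the equation
\Eq{*}{
\mu(S_1(v\circ\sigma),\dots,S_{|S|}(v\circ\sigma),\uel{(n-|S|)}{x})=\mu(M_1(v\circ\sigma),\dots,M_n(v\circ\sigma))
}
defining $\alpha(v\circ\sigma)$ coincides with the equation defining $\alpha(v)$, so by the uniqueness part of Lemma~\ref{lem:Is} we conclude $\alpha(v\circ\sigma)=\alpha(v)$.

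There is no real obstacle here: all the work was already carried out in establishing Lemma~\ref{lem:IFT} (which, in turn, rested on Lemma~\ref{lem:Is}). The only point that requires any attention is checking that the hypotheses of Lemma~\ref{lem:IFT} are met, which amounts to the very definition $S\in\EE(M)$, that is, $(S_1,\dots,S_{|S|})\embed(M_1,\dots,M_n)$ pointwise on $I^k$.
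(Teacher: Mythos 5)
Your proposal is correct and follows essentially the same route as the paper: the mean property and continuity are read off from Lemma~\ref{lem:IFT}, and symmetry is obtained from the symmetry of the $S_i$ and $M_j$ together with the uniqueness of the solution of the defining equation (the paper phrases this last step via strict monotonicity of $\mu$, which is exactly what underlies the uniqueness in Lemma~\ref{lem:Is}). Your explicit remark that $\min(v)\le\min_j M_j(v)$ and $\max_j M_j(v)\le\max(v)$ is a small but welcome clarification of a step the paper leaves implicit.
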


\begin{proof}
Denote briefly $K:={\mathscr T}_{S,M}(\mu)$, $m:=|S|$. By Lemma~\ref{lem:IFT} we know that 
\Eq{*}{
\min(M_1,\dots,M_n)\le K \le \max (M_1,\dots,M_n),
}
therefore $K$ is an $n$-variable mean on $I$. The "moreover" part of this lemma implies that $K$ is also continuous. 
Finally, since all $S_i$-s and $M_j$-s are symmetric, for every permutation $\tilde v$ of a vector $v \in I^k$, we have
\Eq{*}{
   \mu(S_1(v),\dots,S_m(v),\uel{}{K(v)})&=\mu(M_1(v),\dots,M_n(v)) \\
   &=\mu(M_1(\tilde v),\dots,M_n(\tilde v))\\
&= \mu(S_1(\tilde v),\dots,S_m(\tilde v),\uel{}{K(\tilde v)})\\
&= \mu(S_1(v),\dots,S_m(v),\uel{}{K(\tilde v)}).
}
Since $\mu\in \CS_n(I)$, this equality yields $K(\tilde v)=K(v)$, which implies that $K$ is symmetric. 
\end{proof}

The next lemma is a sort of comparability-type statement. More precisely we show that $ {\mathscr T}_{S,M}$ is monotone with respect to $S$ and $M$ (in the majorization ordering).

\begin{lem}\label{lem:Membed}
        Let $n,k \in \N$ and $I \subset \R$ be an interval. Take $M,M^* \in \MM_k(I)^n$ with $M^* \prec M$ and $S\in \EE(M)$, $S^* \in \EE(M^*)$ with $S \prec S^*$ and $\abs{S}=\abs{S^*}$. Then 
    \Eq{*}{
    {\mathscr T}_{S^*,M^*}(\mu) \le {\mathscr T}_{S,M}(\mu)\quad\text{ for all }\mu\in \CS_n(I).
    }
\end{lem}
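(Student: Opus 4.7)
The plan is to argue pointwise: fix $v\in I^k$, denote $F:={\mathscr T}_{S,M}(\mu)$, $F^*:={\mathscr T}_{S^*,M^*}(\mu)$, and $m:=|S|=|S^*|$, and then chain a short sequence of inequalities for $\mu$-values so that strict monotonicity of $\mu$ in its last $n-m$ arguments pins down the desired inequality $F^*(v)\le F(v)$.

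First I would collect two monotonicity consequences of Lemma~\dotref{lem:m=n}{4}. From the hypothesis $M^*\prec M$, evaluating pointwise and using symmetry and monotonicity of $\mu$, one immediately gets
\[
\mu(M^*_1(v),\dots,M^*_n(v))\le \mu(M_1(v),\dots,M_n(v)).
\]
For the second one I need a small auxiliary observation: if $a,b\in\R^m$ with $a\prec b$ and $c\in\R^\ell$, then $(a,c)\prec(b,c)$ in $\R^{m+\ell}$. This is the standard fact that the $k$-th largest element of a multiset $X\cup C$ is a monotone function of $X$ under componentwise order of sorted sequences (it can be proved at once via $\#\{(a,c)_i\ge x\}\le \#\{(b,c)_i\ge x\}$ for every $x$). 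Applied to the pointwise hypothesis $S\prec S^*$ with $c=(F(v),\dots,F(v))$ ($n-m$ entries), it yields
\[
(S_1(v),\dots,S_m(v),F(v),\dots,F(v))\prec(S^*_1(v),\dots,S^*_m(v),F(v),\dots,F(v)),
\]
and Lemma~\dotref{lem:m=n}{4} then gives
\[
\mu\big(S_1(v),\dots,S_m(v),F(v),\dots,F(v)\big)\le \mu\big(S^*_1(v),\dots,S^*_m(v),F(v),\dots,F(v)\big).
\]

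Now I combine. The defining equation of $F$ rewrites the left-hand side as $\mu(M_1(v),\dots,M_n(v))$, so together with the first inequality,
\[
\mu(M^*_1(v),\dots,M^*_n(v))\le \mu\big(S^*_1(v),\dots,S^*_m(v),F(v),\dots,F(v)\big).
\]
The defining equation of $F^*$ rewrites the left-hand side as $\mu(S^*_1(v),\dots,S^*_m(v),F^*(v),\dots,F^*(v))$. Since $\mu\in\CS_n(I)$ is strictly increasing (and symmetric) in its last $n-m$ arguments, this last inequality forces $F^*(v)\le F(v)$. As $v$ was arbitrary, the lemma follows.

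The only real nuisance is the appending sublemma about $\prec$; everything else is a mechanical chaining of Lemma~\dotref{lem:m=n}{4} with the defining equations of $\mathscr T_{S,M}(\mu)$ and $\mathscr T_{S^*,M^*}(\mu)$. If the author regards that sublemma as part of the "straightforward" facts folded into Lemma~\ref{lem:m=n}, the proof reduces to a three-line computation; otherwise a single paragraph establishes it.
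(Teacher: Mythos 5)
Your proof is correct and follows essentially the same route as the paper: both arguments chain the inequality $\mu(M^*(v))\le\mu(M(v))$ with the inequality $\mu(S_1(v),\dots,S_m(v),x,\dots,x)\le\mu(S_1^*(v),\dots,S_m^*(v),x,\dots,x)$ and the two defining equations, then conclude via strict monotonicity of $x\mapsto\mu(\cdot,x,\dots,x)$. The only cosmetic difference is in the second inequality: the paper applies Lemma~\dotref{lem:m=n}{4} to the $m$-variable section $(y_1,\dots,y_m)\mapsto\mu(y_1,\dots,y_m,x,\dots,x)$, whereas you append the block $(x,\dots,x)$ to the majorization $S(v)\prec S^*(v)$ via your (correct) counting sublemma and then apply the lemma in dimension $n$ --- both are valid.
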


\begin{proof}
Fix $\mu\in \CS_n(I)$, $v \in I^k$, $m:=|S|$ and set $a:={\mathscr T}_{S,M}(\mu)(v)$;  $a^*:={\mathscr T}_{S^*,M^*}(\mu)(v)$. By Lemma~\dotref{lem:m=n}{4}, we have
\Eq{*}{
\mu(M_1(v),\dots,M_n(v)) &\ge \mu(M_1^*(v),\dots,M_n^*(v)).
}
Moreover, for each $x \in I$, the map 
\Eq{*}{
I^m \ni (y_1,\dots,y_m)\mapsto \mu(y_1,\dots,y_m,\uel{n-m}x) \in I
}
is nondecreasing in each variable. Whence, applying Lemma~\dotref{lem:m=n}{4} again, for all $x \in  I$ we get
\Eq{*}{
\mu(S_1(v),\dots,S_m(v),\uel{}x) &\le \mu(S_1^*(v),\dots,S_m^*(v),\uel{}x)\qquad(x\in I).}
Substracting the inequalities above side-by-side, we get $f(x)\le f^*(x)$, where
\Eq{*}{
f(x)&:= \mu(S_1(v),\dots,S_m(v),\uel{}x) - \mu(M_1(v),\dots,M_n(v)) &\quad(x \in I); \\
f^*(x)&:= \mu(S_1^*(v),\dots,S_m^*(v),\uel{}x)- \mu(M_1^*(v),\dots,M_n^*(v)) &\quad(x \in I).
}
Since $\mu$ is continuous and strictly increasing in each of its variables, we find that so are $f$ and $f^*$. Furthermore, $f(a)=f^*(a^*)=0$. 
Thus $f(a^*) \le f^*(a^*)=0=f(a)$ which implies $a^* \le a$.
\end{proof}

\section{Applications}

This section contains three parts. First, we focus exclusively on the family of power means. The aim of this subsection is to show a typical application of Lemma~\ref{lem:monotone} and Theorem~\ref{thm:T}. In the second subsection, we present the important subcase of Theorem~\ref{thm:T} (${|S|}=1$). It leads us to the generalization of beta-type means (cf. Himmel-Matkowski \cite{HimMat18a}), which we are going to describe in the next step. In the final subsection we go back to the notion of invariance to generalize the results contained in Matkowski-Pasteczka \cite{MatPas20b}.

\subsection{Power means}
In this section, we study the embeddability of two maps consisting of power means. Recall that the $n$-variable power mean of order $s$ is defined by
\Eq{*}{
\P_s(x_1,\dots,x_n)=\begin{cases}
\Big(\dfrac{x_1^s+\cdots+x_n^s}{n}\Big)^{1/s} &\quad \text{ if } s\in\R\setminus\{0\}, \\[2mm]                                              
\sqrt[n]{x_1\cdots x_n} &\quad \text{ if } s = 0,
\end{cases}
}
where $n \in \N$ and $x_1,\dots,x_n \in \R_+$. Now we deliver the necessary and sufficient conditions for the embeddability of two sequences containing power means only. Remarkably, this result is based on the classical fact stating that power means are nondecreasing in their parameter.
Therefore, the result below can be easily adapted to other families of means.

\begin{prop}\label{prop:PM}
    Let $m, n \in \N$ $m \le n$ and $\alpha \in \R^m$, $\beta \in \R^n$. Then $\alpha \embed \beta$ if and only if 
    $(\P_{\alpha_1},\dots,\P_{\alpha_m})\embed (\P_{\beta_1},\dots,\P_{\beta_n})$.
\end{prop}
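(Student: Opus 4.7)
The plan is to deduce both implications from the classical monotonicity of power means in their parameter. More precisely, for each fixed $v \in \R_+^k$ (where $k$ denotes the common number of variables of the power means under consideration), the single-variable map
\[
\varphi_v \colon \R \to \R_+, \qquad \varphi_v(s) := \P_s(v),
\]
is nondecreasing on $\R$, and strictly increasing whenever the coordinates of $v$ are not all equal. Once this is available, Lemma~\ref{lem:monotone}\,(c) does all the work.

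For the forward direction, assuming $\alpha \embed \beta$, I would fix an arbitrary $v \in \R_+^k$ and apply Lemma~\ref{lem:monotone}\,(c) to the monotone function $\varphi_v$. This immediately yields $\vec{\varphi_v}(\alpha) \embed \vec{\varphi_v}(\beta)$, which is exactly the pointwise embeddability $(\P_{\alpha_1}(v),\dots,\P_{\alpha_m}(v)) \embed (\P_{\beta_1}(v),\dots,\P_{\beta_n}(v))$. Since $v$ was arbitrary, the corresponding function-level embeddability follows at once.

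For the converse, I would specialize the hypothesis to a single well-chosen vector $v_0 \in \R_+^k$ with non-constant coordinates (possible as soon as $k \ge 2$). For such $v_0$ the map $\varphi_{v_0}$ is a strictly increasing bijection from $\R$ onto its image $J \subset \R_+$. The assumed function-level embeddability, evaluated at $v_0$, reads $\vec{\varphi_{v_0}}(\alpha) \embed \vec{\varphi_{v_0}}(\beta)$, and a second application of Lemma~\ref{lem:monotone}\,(c), this time to the strictly increasing inverse $\varphi_{v_0}^{-1} \colon J \to \R$, recovers $\alpha \embed \beta$.

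The main obstacle, and the only genuine analytic input, is the strict monotonicity of $s \mapsto \P_s(v)$ at vectors with non-equal entries; this is a standard consequence of the strict power-mean inequality but does not follow from the earlier lemmas and should be cited or briefly justified in passing. A related tacit hypothesis is $k \ge 2$: for $k = 1$ every $\P_s$ coincides with the identity on $\R_+$, so the function-level embeddability holds trivially and the backward implication fails for any pair $(\alpha,\beta)$ with $\alpha \not\embed \beta$.
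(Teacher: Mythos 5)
Your proof is correct and follows essentially the same route as the paper's: both implications are reduced to Lemma~\ref{lem:monotone}(c) applied to the strictly increasing, continuous map $s\mapsto\P_s(v)$ (and its inverse) for a nonconstant vector $v$, with constant vectors handled trivially. Your caveat about $k\ge 2$ is a fair observation; the paper avoids the issue by quantifying the pointwise embeddability over vectors of every arity, $x\in\bigcup_{n=1}^\infty\R_+^n$, so a nonconstant test vector is always available.
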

\begin{proof}
For a vector $x$ of positive numbers, define $f_x\colon \R \to \R_+$ by $f_x(s):=\P_s(x)$. Then 
\Eq{*}{
(\P_{\alpha_1},\dots,\P_{\alpha_m})\embed (\P_{\beta_1},\dots,\P_{\beta_n}) \iff \Big( \vec{f_x}(\alpha) \embed \vec{f_x}(\beta) \text{ for all }x \in\bigcup_{n=1}^\infty \R_+^n\Big).
}

Observe first that if $x$ is a constant vector, then the property $\vec f_x(\alpha)\embed \vec{f_x}(\beta)$
is trivially valid. Otherwise, it is a classical result saying that for every nonconstant vector $x$ the mapping $f_x$ is strictly increasing and continuous (and so is its inverse $f_x^{-1}$). Thus, by Lemma~\dotref{lem:monotone}{c}, we obtain that $\vec f_x(\alpha)\embed \vec{f_x}(\beta)$ holds if, and only if, $\alpha \embed \beta$.
\end{proof}
\begin{exa}
Let $n=4$, $m=2$, $\alpha=(0,2)$, $\beta=(-2,-1,1,3)$. Then $\alpha \embed \beta$ and whence, by Proposition~\ref{prop:PM}, $(\P_0,\P_2)\embed (\P_{-2},\P_{-1},\P_1,\P_3)$. Then, in view of Theorem~\ref{thm:T}, we have $\mathscr{T}_{(\P_0,\P_2),(\P_{-2},\P_{-1},\P_1,\P_3)} \colon \CS_4(I) \to \MM_k(\R_+)$. If we now take $+_4\colon \R_+^4 \to \R$ and $*_4\colon \R_+^4 \to \R_+$ as a sum and product of four variables we obtain two means in $\MM_k(\R_+)$:
\Eq{*}{
\M_1(v):=\mathscr{T}_{(\P_0,\P_2),(\P_{-2},\P_{-1},\P_1,\P_3)}(+_4)(v)&=\frac{\P_{-2}(v)+\P_{-1}(v)+\P_1(v)+\P_3(v)-\P_0(v)-\P_2(v)}2;\\
\M_2(v):=\mathscr{T}_{(\P_0,\P_2),(\P_{-2},\P_{-1},\P_1,\P_3)}(*_4)(v)&=\sqrt{\frac{\P_{-2}(v)\P_{-1}(v)\P_1(v)\P_3(v)}{\P_0(v)\P_2(v)}}.
}
Moreover $\P_{-2}\le\M_i\le\P_3$ ($i\in\{1,2\}$).
\end{exa}
\subsection{Generalized beta-type means}

Following Himmel--Matkowski \cite{HimMat18a}, for a given $k\in \mathbb{N}$ we define a $k$-variable Beta-type mean $\B_k\colon \R_{+}^k\rightarrow \R_{+}$ by 
\begin{equation*}
\B_k(v_{1},\dots ,v_k):=\bigg(\frac{kv_{1}\cdots v_k}{%
v_{1}+\dots +v_k}\bigg)^{\frac{1}{k-1}}.
\end{equation*}%
This is a particular case of so-called biplanar-combinatoric means ({\it Media biplana combinatoria}) defined in Gini \cite{Gin38} and Gini--Zappa \cite{GinZap38}. We deliver another generalization of this mean.

Indeed, the following proposition is the immediate consequence of Theorem~\ref{thm:T} with $\alpha=1$ and $M_j$-s defined as a projection to $j$-th variable. 
\begin{prop}
        Let $k \in \N$ and $I \subset \R$ be an interval, $\mu\in \CS_k(I)$ and $S \in \MM_k(I)$. Then the function $S^{\{\mu\}}\colon I^k\to I$ defined as the unique solution $x$ of the equation
    \Eq{*}{
    \mu(S(v),\uel{k-1}{x})=\mu(v)\qquad (v \in I^k)
    }
    is a continuous, symmetric $k$-variable mean on $I$.
\end{prop}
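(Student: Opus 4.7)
The plan is to view the statement as a direct specialization of Theorem~\ref{thm:T} (or, equivalently, of Lemma~\ref{lem:IFT}), applied with the parameters $n:=k$, $m:=1$, the trailing means $M_j:=\pi_j$ defined as projections onto the $j$-th coordinate ($\pi_j(v):=v_j$), and the single front mean $S_1:=S$. Under this identification, the defining equation of $S^{\{\mu\}}$ coincides with \eq{MN}, and the claim $S^{\{\mu\}}\in\MM_k(I)$ is exactly the conclusion of Theorem~\ref{thm:T}.

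The first concrete step is to verify the embeddability hypothesis $(S)\embed(\pi_1,\dots,\pi_k)$, that is, that for every $v\in I^k$ one has $(S(v))\embed(v_1,\dots,v_k)$. Because the left-hand vector has only one entry, this reduces to the two inequalities $S(v)\le\max(v_1,\dots,v_k)$ and $S(v)\ge\min(v_1,\dots,v_k)$, which is just the mean property of $S$. Once this is in place, Lemma~\ref{lem:IFT} supplies a unique continuous function $S^{\{\mu\}}\colon I^k\to I$ solving the equation pointwise and satisfying $\min(v_1,\dots,v_k)\le S^{\{\mu\}}(v)\le\max(v_1,\dots,v_k)$, which yields both the mean property and continuity.

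The only delicate point (and the main obstacle, such as it is) is symmetry: strictly speaking, Theorem~\ref{thm:T} requires $M\in\MM_k(I)^n$, whereas the projections $\pi_j$ are not symmetric. However, this is illusory, because the symmetry argument inside the proof of Theorem~\ref{thm:T} uses symmetry of the $M_j$ only through the fact that the tuple $(M_1(v),\dots,M_n(v))$ is unaffected by permuting $v$. For projections we instead have $(\pi_1(v\circ\sigma),\dots,\pi_k(v\circ\sigma))=v\circ\sigma$, which is just a permutation of $(\pi_1(v),\dots,\pi_k(v))=v$; since $\mu\in\CS_k(I)$ is itself symmetric, we still conclude $\mu(\pi_1(v\circ\sigma),\dots,\pi_k(v\circ\sigma))=\mu(\pi_1(v),\dots,\pi_k(v))$, and combining this with the symmetry of $S$ the uniqueness clause of Lemma~\ref{lem:IFT} forces $S^{\{\mu\}}(v\circ\sigma)=S^{\{\mu\}}(v)$. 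Hence $S^{\{\mu\}}\in\MM_k(I)$, and no further computation is required.
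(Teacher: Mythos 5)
Your proposal is correct and follows essentially the same route as the paper, which offers no written proof and merely declares the proposition an ``immediate consequence'' of Theorem~\ref{thm:T} with $\abs{S}=1$ and the $M_j$-s taken to be the coordinate projections. In fact your treatment is slightly more careful than the paper's: you notice that the projections are not symmetric, so they do not belong to $\MM_k(I)$ and Theorem~\ref{thm:T} does not literally apply, and you correctly repair this by observing that the tuple $(\pi_1(v\circ\sigma),\dots,\pi_k(v\circ\sigma))$ is a permutation of $(\pi_1(v),\dots,\pi_k(v))$, so the symmetry of $\mu$ together with the uniqueness clause of Lemma~\ref{lem:IFT} still yields $S^{\{\mu\}}(v\circ\sigma)=S^{\{\mu\}}(v)$.
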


\begin{rem} 
In the particular case when $\mu=\P_0$ and $S=\P_1$ we obtain that for all $k \in \N$ and $v_1,\dots,v_k \in \R_+$, the value of $S^{\{\mu\}}(v_1,\dots,v_k)$ is the solution $x$ of the equation
\Eq{*}{
\sqrt[k]{\frac{v_1+\dots+v_k}k x^{k-1}}=\sqrt[k]{v_1\dots v_k}.
}
After easy simplification we obtain 
\Eq{*}{
S^{\{\mu\}}(v_1,\dots,v_k)=\bigg(\frac{kv_{1}\cdots v_k}{%
v_{1}+\dots +v_k}\bigg)^{\frac{1}{k-1}}=\B_k(v_1,\dots,v_k),
}
which shows that $\B_k=S^{\{\mu\}}=\P_1^{\{\P_0\}}$.
\end{rem}

\subsection{\label{sec:ApplInvMean} Case of invariant mean}
In this short section, we generalize the notion of complementary means introduced recently in \cite{MatPas20b}. Recall that $K \colon I^n \to I$ is invariant with respect to the mean-type mapping $M \in \MM_n(I)^n$ (briefly \emph{$M$-invariant}) if it solves the functional equation \eq{EInv}. 
There are a few classical sufficient conditions to warranty that there is exactly one \mbox{$M$-invariant} mean. The most classical assumptions (see for example \cite[Theorem~8.7]{BorBor87}) claim that each $M_k$ is strict (that is $\min(v)<M_k(v)<\max(v)$ for every nonconstant vector $v \in I^n$) and continuous. This assumption can be relaxed (see for example \cite{Mat09e} or \cite{MatPas21}). In our setup, we assume that all means belong to $\MM_n(I)\cap \CS_n(I)$, which implies strictness and continuity. 

We now formulate Theorem~\ref{thm:T} in the case when $\mu$ is $M$-invariant mean
\begin{prop}
     Let $n \in \N$, $I \subset \R$ be an interval, $M\in (\MM_n(I)\cap\CS_n(I))^n$, and $S \in \EE(M)$. Moreover, let $K\in \CS_n(I)$ be the (unique) $M$-invariant mean. Then the functional equation 
      \Eq{*}{
    K(S_1(v),\dots,S_{\abs S}(v),\uel{(n-|S|)}{T(v)})=K(v)\qquad (v \in I^n)
    }
    possesses exactly one solution $T_0$ in the family of means. Moreover $T_0={\mathscr T}_{S,M}(K)\in \MM_n(I)$.
\end{prop}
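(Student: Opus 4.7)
The proposition is essentially a direct repackaging of Theorem~\ref{thm:T} once one applies the $M$-invariance of $K$ to the right-hand side of the functional equation. So the plan is quite short.

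First I would rewrite the functional equation. Since $K$ is $M$-invariant, we have $K(v)=K(M_1(v),\dots,M_n(v))$ for every $v\in I^n$. Substituting this into the right-hand side of the equation in the statement, the equation for $T$ becomes
\[
K\bigl(S_1(v),\dots,S_{|S|}(v),\uel{n-|S|}{T(v)}\bigr)
=K\bigl(M_1(v),\dots,M_n(v)\bigr)\qquad (v\in I^n),
\]
which is exactly the defining relation of $\mathscr{T}_{S,M}(K)$.

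Next, I would invoke the hypotheses to apply the earlier machinery. Since $K\in\CS_n(I)$, $M\in\MM_k(I)^n$ with $k=n$, and $S\in\EE(M)$, Lemma~\ref{lem:IFT} (applied pointwise, as in Lemma~\ref{lem:Is}) guarantees that for each $v\in I^n$ the value $\mathscr{T}_{S,M}(K)(v)$ exists and is the unique element of $I$ satisfying that equation. Hence $T_0:=\mathscr{T}_{S,M}(K)$ is the unique function $T\colon I^n\to I$ solving the displayed equation. Theorem~\ref{thm:T} then yields $T_0\in\MM_n(I)$, so in particular $T_0$ is a mean, which gives both existence in the family of means and uniqueness within any wider class of candidate functions (and a fortiori within means).

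There is no real obstacle here: the only thing to verify is that the hypotheses of Theorem~\ref{thm:T} are fulfilled, namely that $K$ plays the role of $\mu\in\CS_n(I)$ and that $S\in\EE(M)$, both of which are assumed. The substance of the proposition is entirely absorbed by the invariance identity $K\circ(M_1,\dots,M_n)=K$, which converts the left-hand side $K(v)$ into the form required by Theorem~\ref{thm:T}.
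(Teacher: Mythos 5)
Your proposal is correct and is exactly the argument the paper intends: the paper gives no explicit proof, presenting the proposition as a direct reformulation of Theorem~\ref{thm:T} obtained by replacing $K(v)$ with $K(M_1(v),\dots,M_n(v))$ via invariance, with uniqueness supplied by Lemma~\ref{lem:IFT}. Nothing is missing.
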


This proposition improves the setup of the paper \cite{MatPas20b} where it was shown in the case when $S$ is a subsequence of $M$.

\end{document}